\numberwithin{figure}{section}
\numberwithin{equation}{section}
\numberwithin{table}{section}
\numberwithin{theorem}{section}
\newcommand{\EE}{\MM{E}}
\renewcommand{\SS}{\MM{S}}
\newcommand{\ee}{{\mathrm e}}
\newcommand{\ii}{{\mathrm i}}
\newcommand{\LT}{\mathrm{LT}}
\newcommand{\St}{\mathrm{S}}
\newcommand{\LL}{\mathcal{L}}
\date{}
\begin{document}

\title{Splitting methods for unbounded operators}

\author{Arieh Iserles\\
Department of Applied Mathematics and Theoretical Physics\\
University of Cambridge\\
Cambridge CB3 0WA, United Kingdom \and
Karolina Kropielnicka\\
Institute of Mathematics of Polish Academy of Sciences\\
18, Abrahama Street, Sopot 81-825, Poland}

\maketitle

\thispagestyle{empty}
\begin{abstract}
  
  This paper considers computational methods that split a vector field into three components in the case when both the vector field and the split components might be unbounded. We first employ classical Taylor expansion which, after some algebra, results in an expression for a second-order splitting which, strictly speaking, makes sense only for bounded operators. Next, using an alternative approach, we derive an error expression and an error bound in the same setting which are however  valid in the presence of unbounded operators. 
  
  While the paper itself is concerned with second-order splittings using three components, the method of proof in the presence of unboundedness remains valid (although significantly more complicated) in a more general scenario, which will be the subject of a forthcoming paper.
\end{abstract}

\noindent{\bf Keywords:} Operator splitting, unbounded operators, Duhamel's formula, error bounds.

\noindent{\bf AMS (MOC) Subject Classification:} Primary 65J10, Secondary 65M15.


\section{The problem}

Splitting methods are a major methodology in modern numerical analysis of time-dependent differential equations \cite{blanes24smd,mclachlan02sm}. Very often we wish to solve a problem of the form
\begin{equation}
  \label{eq:1.1}
  \frac{\D u}{\D t}=f(u)+g(u),\qquad t\geq t_0,
\end{equation}
given with an initial condition at $t=t_0$ and possibly suitable boundary conditions, while the solution of the problems
\begin{equation}
  \label{eq:1.2}
  \frac{\D v}{\D t}=f(v)\qquad\mbox{and}\qquad \frac{\D w}{\D t}=g(w)
\end{equation}
is either known or can be approximated with great ease. The idea of a splitting method is to compose an approximation to the solution of \R{eq:1.1} by leveraging our ability to solve \R{eq:1.2}. In particular, once both $f$ and $g$ are linear, $f(u)=Au$ and $g(u)=Bu$ (where $A$ and $B$ are, in general, linear operators), the exact solution of \R{eq:1.1}, $u(t_0+h)=\ee^{h\LL}u(t_0)$ where $\LL=A+B$, might be difficult to compute and popular approaches are the {\em Lie--Trotter splitting\/}
\begin{equation}
  \label{eq:1.3}
  u(t_0+h)\approx u_{\LT}(t_0+h)=\ee^{hA}\ee^{hB}u(t_0)
\end{equation}
and the {\em Strang splitting\/}
\begin{equation}
  \label{eq:1.4}
  u(t_0+h)\approx u_{\St}(t_0+h)=\ee^{\frac12 hA}\ee^{hB}\ee^{\frac12 hA}u(t_0).
\end{equation}

Provided that $A$ and $B$, hence also $\LL$, are bounded linear operators (for example, finite-dimensional matrices), we can expand in Taylor series and prove with great ease that
\begin{displaymath}
  u_{\LT}(t_0+h)-u(t_0+h)+\O{h^2},\qquad u_{\St}(t_0+h)=u(t_0+h)+\O{h^3}
\end{displaymath}
-- the Lie--Trotter and Strang splittings are of {\em order} 1 and 2, respectively. Likewise, it is possible to derive higher-order splittings, cf.\ \cite{blanes24smd,mclachlan02sm}. This, however, becomes significantly more complicated once either $A$ or $B$ (or both) are unbounded operators, since Taylor expansion may no longer make sense . An example, that motivated much interest in this problem, is the {\em linear Schr\"odinger equation\/}
\begin{equation}
  \label{eq:1.5}
  \ii\frac{\partial u}{\partial t}=\frac12 \Delta u-V(x)u,\qquad x\in\mathbb{R}^d,
\end{equation}
given with the initial condition $u(x,0)=u_0\in\CC{H}^1(\mathbb{R}^d)$. Here $V$ is the {\em interaction potential\/} of a quantum system, $V:\mathbb{R}^d\rightarrow\mathbb{R}$. Note that, while $V$ is a multiplication operator, which is bounded,\footnote{Its boundedness is contingent on $V$ being a bounded function. There are instances, like the evolution of a hydrogen atom in quantum mechanics, where $V$ is unbounded.} the Laplace operator $\Delta$ is unbounded. Thus, while  $\ii t(-\frac12 \Delta+V)$ is unbounded  it may generate  a  semigroup  $\ee^{\ii t(-\frac12 \Delta+V)}$ that is strongly continuous, but not necessarily have a Taylor expansion. The same applies to  $\ee^{\ii\alpha t\Delta}$ and $\ee^{\ii\alpha tV}$ for any $\alpha\in\mathbb{R}$. Therefore we cannot be assured that, say, the Lie--Trotter splitting \R{eq:1.3} or the Strang splitting \R{eq:1.4} are of order 1 or 2, respectively, just by using a Taylor expansion.

Another way of looking at the problem is that unboundedness places additional restrictions on the function class under consideration. Referring again to \R{eq:1.5} and restricting the discussion to classical solutions, we need $u\in\CC{H}^2(\mathbb{R}^d)$ for $\Delta u$ to make sense and $u\in\CC{H}^{2s}(\mathbb{R}^d)$ for the approximation $ \ee^{\ii t\Delta}u\approx \sum_{m=0}^s \frac{1}{m!} (\ii t\Delta)^m u$ to be well defined. This means regularity well in excess of the minimal existence and uniqueness conditions for a classical solution of \R{eq:1.5}. 

A good illustration of limitations on regularity is the function
\begin{displaymath}
  u(x)=
  \begin{case}
    \ee^{-x^\alpha}, & x>0,\\[4pt]
    0, & x\leq0,
  \end{case}
\end{displaymath}
where $\alpha>0$ is non-integer. It is elementary to prove that $u\in\CC{H}^{s}(0,\infty)$ if and only if $\alpha>s-\frac12$. (If $\alpha\in\mathbb{N}$ then $u\in\CC{C}^\infty(0,\infty)$.)

The minimal conditions consistent with an order-$p$ splitting require $u\in\CC{H}^p$ (on the face of it, it might appear that we need $u\in\CC{H}^{p+1}$, to ensure that the error term is bounded, but we will see in the sequel that this is not necessary for $p=2$ and it is safe to conjecture that this is the general state of affairs). The challenge is to close the gap between $p$ and $2p$, specifically to prove that $u\in\CC{H}^p$, in tandem with formal order conditions, is sufficient for order $p$ {\em and\/} for an explicit form of the error. 

Splittings of unbounded operators have been subject to significant research effort \cite{hansen09esu,jahnke00ebe,kropielnicka03fst}. Recently, the present authors introduced  an elementary approach allowing for order analysis and the derivation of tight error bounds for three basic splittings, Lie--Trotter, parallel Lie--Trotter and Strang \cite{iserles04eas}. The purpose of this paper is to sketch an extension of this approach to more general splittings, an endeavour replete with challenges yet allowing eventual order and error analysis using fairly elementary means.

The splittings investigated in this paper are of the form
\begin{equation}
  \label{eq:1.6}
  \SS(t;P_1,\ldots,P_n):=\ee^{tP_1}\ee^{tP_2}\cdots\ee^{tP_n}\approx\ee^{t(P_1+P_2+\cdots+P_n)},
\end{equation}
where $\LL, P_1,\ldots,P_n$ are linear operators, which might be unbounded, and $\sum_{k=1}^n P_k=\LL$. We say that a splitting is {\em canonical\/} if $\LL=A+B$, $[A,B]\neq O$ and each $P_\ell$ is either a scalar multiple of $A$ or a scalar multiple of $B$: the Lie--Trotter splitting \R{eq:1.3} and the Strang splitting \R{eq:1.4} are both canonical.\footnote{While a canonical splitting is defined formally also when $[A,B]=O$, in that case it bears no error, somehow missing the entire point of this paper.} In the sequel we do not require the splitting to be canonical unless otherwise stated.

The mechanism that allows to reduce regularity requirements for splittings is implicit in more general rules pertaining to commutators of differential operators. In our case $A=-\frac12\ii\Delta$ (unbounded) and $B=\ii V$ (bounded) -- but
\begin{displaymath}
  [A,B]=-\frac12  \Delta V+\frac12 (\nabla V)\cdot\nabla
\end{displaymath}
is a bounded operator in $\CC{H}^1$, while
\begin{displaymath}
  [B,[A,B]]=\frac12\ii (\nabla V)^\top(\nabla V)
\end{displaymath} 
is a multiplication operator in $\CC{H}^0=\CC{L}_2$. This is just the simplest instance of a more general state of affairs \cite{bader14eaf,chodosh11imr,faou24dsp}. Thus, let $\GG{F}=\GG{F}(A,B)$ be the free Lie algebra generated by the `letters' $A$ and $B$. Given $T\in\GG{F}$, it is composed of, say, $m\geq0$ $A$s and $n\geq0$ $B$s. If $n\geq m+2$ then necessarily $T\equiv0$, otherwise $Tu$ is bounded for $u\in\CC{H}^{m+1-n}(\mathbb{R}^d)$ (and, if $n=m+1$ is a multiplication operator). This is important because, as will be apparent in the sequel, the error expansion of a splitting \R{eq:1.6} can be expressed using commutators in $\GG{F}(P_1,\ldots,P_n)$.

In Section~2 we consider the case $n=3$ using canonical approach, expanding into Taylor series. This turns out to be surprisingly subtle. While establishing  second-order conditions and writing the leading error term is in principle easy, its naive form adds little to our insight because it is seemingly not in $\GG{F}(P_1,P_2,P_3)$. It takes significant effort to show that actually it is composed solely of commutators. Second shortcoming of this approach is that, if any of the $P_\ell$s is unbounded, it tells us little about the error
\begin{displaymath}
  \EE(t)=\SS(t;P_1,\ldots,P_n)-\ee^{t(P_1+P_2+\cdots+P_n)}
\end{displaymath}
or even if it is bounded. This motivates the work of Section~3, where we employ a different approach, expanding upon the work of \cite{iserles04eas}, which allows unboundedness while deriving second-order conditions and an upper bound of the error.

Note that $n=3$ allows for order 2 at most: if we wish order 3 with a canonical splitting we need at least $n=7$. We hope to revisit more general splittings, employing the approach of Section~3, in a future paper. Note further that according to a theorem of Sheng \cite{blanes24smd} any splitting of order greater than two must have some negative coefficients. This renders such splittings of little utility in the solution of diffusive phenomena, which are well posed only in positive direction. (The problem can be overcome by using complex-valued steps \cite{hansen09hos}, an approach that we do not pursue in this paper.)

\section{Classical approach}

We let $\LL=A+B$ be the vector field and split $\LL=P_1+P_2+P_3$. In this section we assume that the $P_\ell$s are bounded. Straightforward Taylor expansion (paying heed to non-commutativity of the $P_\ell$s) yields
\begin{displaymath}
  \ee^{tP_1}\ee^{tP_2}\ee^{tP_3}-\ee^{t(A+B)}=E_2t^2+E_3t^3+\O{t^4},
\end{displaymath}
where
\begin{displaymath}
  E_2=\frac12 ([P_1,P_2]+[P_1,P_3]+[P_2,P_3]),
\end{displaymath}
therefore a second-order condition is
\begin{equation}
  \label{eq:2.1}
  [P_1,P_2]+[P_1,P_3]+[P_2,P_3]=O.
\end{equation}
More effort is required to compute $E_3$,
\begin{Eqnarray*}
  E_3&=&\frac13[P_1,[P_1,P_2]]+\frac16 [P_2,[P_1,P_2]]+\frac13 [P_1,[P_1,P_3]]+\frac16[P_3,[P_1,P_3]]\\
  &&\mbox{}+\frac13[P_2,[P_2,P_3]]+\frac16[P_3,[P_2,P_3]]+\frac16[P_1,[P_2,P_3]]-\frac16[P_3,[P_1,P_2]]\\
  &&\mbox{}+\frac12[P_1,P_2]P_1+\frac12[P_1,P_2]P_2+\frac12[P_1,P_3]P_1+\frac12[P_1,P_3]P_3+\frac12[P_2,P_3]P_2\\
  &&\mbox{}+\frac12[P_2,P_3]P_3+\frac12(P_1P_2P_3-P_3P_2P_1).
\end{Eqnarray*}
This seems as an unwelcome state of affairs, because of the presence of terms of the form $[P_k,P_\ell]P_j\not\in\GG{F}(P_1,P_2,P_3)$. This, however, can be mended once we take \R{eq:2.1} into consideration. Replacing
\begin{Eqnarray*}
  [P_1,P_2]P_1&=&-([P_1,P_3]+[P_2,P_3])P_1,\\{}
  [P_1,P_2]P_2&=&-([P_1,P_3]P_2+[P_2,P_3]P_2,\\{}
  [P_1,P_3]P_3&=&-([P_1,P_2]P_3+[P_2,P_3]P_3,
\end{Eqnarray*}
while leaving all other expressions of the form $[P_k,P_\ell]P_j$ intact, leads to a welter of cancellations and, after elementary algebra,
\begin{Eqnarray*}
  E_3&=&\frac13[P_1,[P_1,P_2]]+\frac16 [P_2,[P_1,P_2]]+\frac13 [P_1,[P_1,P_3]]+\frac16[P_3,[P_1,P_3]]\\
  &&\mbox{}+\frac13[P_2,[P_2,P_3]]+\frac16[P_3,[P_2,P_3]]+\frac16[P_1,[P_2,P_3]]-\frac16[P_3,[P_1,P_2]]\\
  &&\mbox{}+\frac12[P_2,[P_1,P_3]].
\end{Eqnarray*}
Thus, $E_3$ can be expressed using terms in $\GG{F}(P_1,P_2,P_3)$. Yet, it can be further simplified a great deal. Since
\begin{Eqnarray*}
  &&\frac13 [P_1,[P_1,P_2]]+\frac13[P_1,[P_1,P_3]]+\frac16[P_1,[P_2,P_3]]\\
  &=&\frac13 [P_1,[P_1,P_2]+[P_1,P_3]+[P_2,P_3]]-\frac16 [P_1,[P_2,P_3]]=-\frac16[P_1,[P_2,P_3]],\\
  &&\frac16[P_2,[P_1,P_2]]+\frac13 [P_2,[P_2,P_3]]+\frac12[P_2,[P_1,P_3]]\\
  &=&\frac13[P_2,[P_1,P_2]+[P_1,P_3]+[P_2,P_3]]-\frac16 [P_2,[P_1,P_2]]+\frac16 [P_2,[P_1,P_3]]\\
  &=&-\frac16 [P_2,[P_1,P_2]]+\frac16 [P_2,[P_1,P_3]]\\
  &&\frac16[P_3,[P_1,P_3]]+\frac16[P_2,[P_2,P_3]]-\frac16[P_3,[P_1,P_2]]\\
  &=&\frac16[P_3,[P_1,P_2]+[P_1,P_3]+[P_2,P_3]]-\frac13 [P_3,[P_1,P_2]]=-\frac13 [P_3,[P_1,P_2]],
\end{Eqnarray*}
using the second-order condition \R{eq:2.1} yields
\begin{Eqnarray*}
  E_3&=&-\frac16[P_1,[P_2,P_3]]-\frac16 [P_2,[P_1,P_2]]+\frac16 [P_2,[P_1,P_3]]-\frac13 [P_3,[P_1,P_2]].
\end{Eqnarray*}
Finally, we use the Jacobi identity 
\begin{displaymath}
  [P_2,[P_1,P_3]]=[P_1,[P_2,P_3]]+[P_3,[P_1,P_2]]
\end{displaymath}
and the outcome is
\begin{equation}
  \label{eq:2.2}
  E_3=-\frac16[P_2,[P_1,P_2]]-\frac16 [P_3,[P_1,P_2]].
\end{equation}

\section{An iterated Duhamel approach}

We let
\begin{displaymath}
  \SS(t)=\ee^{tP_1}\ee^{tP_2}\ee^{tP_3},\qquad \EE(t)=\ee^{t(P_1+P_2+P_3)}-\MM{S}(t),
\end{displaymath}
subject to the {\em consistency condition\/} $P_1+P_2+P_3=\mathcal{L}$ and seek both order-2 conditions and an explicit expression for the error $\EE$. The operators $P_\ell$ might be unbounded. This section is devoted to the proof of the main result of this paper:

\begin{theorem}
  \label{thm:3rdOrder}
  Subject to the consistency condition $P_1+P_2+P_3=\LL$, the splitting $\SS$ is of order three if and only if 
  \begin{displaymath}
  [P_1,P_2],[P_1,P_3],[P_2,P_3],[P_1,[P_2,P_3]],[P_2,[P_2,P_3]]
  \end{displaymath}
  are bounded and the condition \R{eq:2.1}, namely
  \begin{displaymath}
    [P_1,P_2]+[P_1,P_3]+[P_2,P_3]=O,
  \end{displaymath}
  holds. In that case the error committed by the splitting is
  \begin{Eqnarray}
    \nonumber
    \MM{E}(t)&=&\int_0^t \ee^{(t-\tau)\mathcal{L}} \int_0^\tau \ee^{(\tau-\eta)P_1} \int_0^\eta \ee^{\xi P_1}[P_1,[P_2,P_3]]\ee^{-\xi P_1}\D\xi \ee^{\eta P_1}\D\eta \ee^{\tau P_2}\ee^{\tau P_3}\D\tau\hspace*{10pt} \\
    \label{eq:3.1}
    &&\mbox{}+\int_0^t \ee^{(t-\tau)\mathcal{L}} \int_0^\tau \ee^{\tau P_1} \int_0^\eta \ee^{\xi P_2}[P_2,[P_2,P_3]]\ee^{-\xi P_2}\D\xi \D\eta  \ee^{\tau P_2}\ee^{\tau P_3}\D\tau.
  \end{Eqnarray}
\end{theorem}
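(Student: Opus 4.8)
The plan is to build the error $\EE(t)$ by repeated application of Duhamel's (variation-of-constants) formula, peeling off one exponential factor at a time, exactly in the spirit of \cite{iserles04eas}. First I would write $\SS(t)=\ee^{tP_1}\ee^{tP_2}\ee^{tP_3}$ and observe that $\SS$ satisfies a differential equation of the form $\SS'(t)=\LL\SS(t)-R(t)$, where $R(t)$ collects the ``mismatch'' terms coming from the fact that the three exponentials do not commute; concretely, differentiating gives $\SS'(t)=P_1\SS(t)+\ee^{tP_1}P_2\ee^{tP_2}\ee^{tP_3}+\ee^{tP_1}\ee^{tP_2}P_3\ee^{tP_3}$, and one rewrites $P_2$ and $P_3$ in terms of conjugations $\ee^{sP_j}P_k\ee^{-sP_j}$ so that the ``main part'' assembles into $(P_1+P_2+P_3)\SS(t)=\LL\SS(t)$ and the remainder is an integral of nested commutators. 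Since $\ee^{t\LL}$ solves the homogeneous equation with the same initial value, Duhamel gives $\EE(t)=\ee^{t\LL}-\SS(t)=\int_0^t \ee^{(t-\tau)\LL}R(\tau)\,\D\tau$.

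The key algebraic identity I would use repeatedly is $\ee^{sP}Q=\left(\ee^{sP}Q\ee^{-sP}\right)\ee^{sP}$ together with $\frac{\D}{\D s}\left(\ee^{sP}Q\ee^{-sP}\right)=\ee^{sP}[P,Q]\ee^{-sP}$, hence $\ee^{sP}Q\ee^{-sP}=Q+\int_0^s \ee^{\xi P}[P,Q]\ee^{-\xi P}\,\D\xi$. Applying this first to move $P_2$ past $\ee^{\tau P_1}$ and $P_3$ past $\ee^{\tau P_1}\ee^{\tau P_2}$, the zeroth-order pieces $P_1+P_2+P_3=\LL$ combine with $P_1\SS(t)$ to give the homogeneous part, and what is left is $R(\tau)=\left(\int_0^\tau\ee^{\xi P_1}[P_1,P_2]\ee^{-\xi P_1}\,\D\xi\right)\ee^{\tau P_1}\ee^{\tau P_2}\ee^{\tau P_3}+\ee^{\tau P_1}\left(\int_0^\tau\ee^{\xi P_1}[P_1,P_3]\ee^{-\xi P_1}\,\D\xi\right)\ee^{\tau P_2}\ee^{\tau P_3}+\ee^{\tau P_1}\ee^{\tau P_2}\left(\int_0^\tau\ee^{\xi P_2}[P_2,P_3]\ee^{-\xi P_2}\,\D\xi\right)\ee^{\tau P_3}$. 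At this stage $R(\tau)=\O{\tau}$ already, which recovers order $2$. To push to order $3$ I would iterate once more: in each of the three integrands expand $\ee^{\xi P_j}[P_j,P_k]\ee^{-\xi P_j}=[P_j,P_k]+\int_0^\eta\ee^{\cdot}[\cdot,[P_j,P_k]]\ee^{-\cdot}\,\D\cdot$ (also commuting the resulting constant $[P_j,P_k]$ back through any intervening $\ee^{\tau P_i}$ factors), so that the leading $\O{\tau}$ term of $R$ becomes $\tau\bigl([P_1,P_2]+[P_1,P_3]+[P_2,P_3]\bigr)\ee^{\tau P_1}\ee^{\tau P_2}\ee^{\tau P_3}$ up to an $\O{\tau^2}$ remainder. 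The order-3 condition is then visibly \R{eq:2.1}; imposing it kills this term and leaves precisely a sum of double integrals of doubly-nested commutators, which after collecting terms and using \R{eq:2.1} once more to eliminate $[P_1,[P_1,P_3]]$, $[P_3,[P_1,P_2]]$, etc.\ in favour of $[P_1,[P_2,P_3]]$ and $[P_2,[P_2,P_3]]$ (the same cancellation pattern as in the computation of $E_3$ in Section~2) collapses to the two terms displayed in \R{eq:3.1}.

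For the converse and for the boundedness claims, I would argue that if any of $[P_1,P_2]$, $[P_1,P_3]$, $[P_2,P_3]$ fails to be bounded then the $\O{\tau}$ term in $R$, namely $\int_0^\tau\ee^{\xi P_1}[P_1,P_2]\ee^{-\xi P_1}\,\D\xi\,\ee^{\tau P_1}\ee^{\tau P_2}\ee^{\tau P_3}$ (and its companions), cannot be $\O{\tau^2}$ on the relevant domain, obstructing order $3$; and once \R{eq:2.1} holds, the iterated-Duhamel expansion exhibits $\EE(t)$ as the explicit double integral \R{eq:3.1}, whose integrands involve only $[P_1,[P_2,P_3]]$ and $[P_2,[P_2,P_3]]$ sandwiched between strongly continuous semigroups, so $\EE(t)$ is $\O{t^3}$ exactly when those two commutators are bounded. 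The necessity that exactly these two (rather than some other pair) appear follows because the Jacobi identity plus \R{eq:2.1} force every double commutator in $\GG{F}(P_1,P_2,P_3)$ with the correct letter count into the span of $[P_1,[P_2,P_3]]$ and $[P_2,[P_2,P_3]]$, matching Section~2's $E_3=-\tfrac16[P_2,[P_1,P_2]]-\tfrac16[P_3,[P_1,P_2]]$.

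The main obstacle I anticipate is bookkeeping rather than conceptual: after the second Duhamel iteration one obtains a large number of triple-integral terms (three from the first layer, each spawning one or two from the second layer, plus the corrections from commuting constants like $[P_j,P_k]$ back through $\ee^{\tau P_i}$), and showing that they collapse to exactly the two clean terms in \R{eq:3.1} requires careful use of \R{eq:2.1} and the Jacobi identity at the operator level --- where, crucially, one may only manipulate the \emph{bounded} commutators and must verify that every rearrangement is legitimate when the $P_\ell$ themselves are unbounded (i.e.\ all unbounded operators appear only inside strongly continuous semigroups $\ee^{sP_\ell}$, never bare). Keeping the nested integration limits consistent through these manipulations, and making sure no term with a bare unbounded $P_\ell$ is ever produced, is the delicate part; this is also where the hypothesis that $[P_1,P_2],[P_1,P_3],[P_2,P_3]$ be bounded (not merely \R{eq:2.1}) does real work.
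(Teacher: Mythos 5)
Your overall strategy --- peel off the non-commutativity with Duhamel, use the identity $\ee^{sP}Q\ee^{-sP}=Q+\int_0^s\ee^{\xi P}[P,Q]\ee^{-\xi P}\D\xi$ (this is Proposition~\ref{prop:Z}), then iterate to expose the condition \R{eq:2.1} and the nested commutators --- is exactly the paper's, and your first Duhamel step reproduces the paper's intermediate expression for $\EE(t)$ up to minor misplacements of the factors $\ee^{\tau P_1},\ee^{\tau P_2}$ relative to the conjugation integrals. The genuine gap is in how you organise the second iteration. You propose to expand each of the three integrands separately, e.g.\ $\ee^{\xi P_1}[P_1,P_2]\ee^{-\xi P_1}=[P_1,P_2]+\int_0^\xi\ee^{\sigma P_1}[P_1,[P_1,P_2]]\ee^{-\sigma P_1}\D\sigma$, commute the constants through the remaining exponentials, and then recombine the resulting double commutators via \R{eq:2.1} and Jacobi. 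But that expansion (Proposition~\ref{prop:Z} with $Q=[P_1,P_2]$, resp.\ $Q=[P_1,P_3]$) is only justified when $[P_1,[P_1,P_2]]$ and $[P_1,[P_1,P_3]]$ are bounded, which is not among the hypotheses of Theorem~\ref{thm:3rdOrder}; only $[P_1,[P_2,P_3]]$ and $[P_2,[P_2,P_3]]$ are assumed bounded. In the bounded setting your recombination is just Section~2's algebra, but in the unbounded setting one may not pass through intermediate quantities that are individually meaningless and cancel them afterwards --- precisely the difficulty you flag as ``the delicate part'' without resolving it.

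The paper's device that your proposal is missing is the grouping that makes the first-order part vanish \emph{before} any further expansion. One introduces
\begin{displaymath}
  W(\tau)=\ee^{\tau P_1}\int_0^\tau\ee^{\eta P_2}[P_2,P_3]\ee^{-\eta P_2}\D\eta-\int_0^\tau\ee^{\eta P_1}[P_2,P_3]\ee^{-\eta P_1}\D\eta\,\ee^{\tau P_1},
\end{displaymath}
so that the first-order contribution appears as the single conjugation $\int_0^\tau\ee^{\eta P_1}([P_1,P_2]+[P_1,P_3]+[P_2,P_3])\ee^{-\eta P_1}\D\eta\,\SS(\tau)$, annihilated outright by \R{eq:2.1}, leaving $\EE(t)=\int_0^t\ee^{(t-\tau)\mathcal{L}}W(\tau)\ee^{\tau P_2}\ee^{\tau P_3}\D\tau$. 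The operator $W$ is then pinned down by a third Duhamel application: $W(0)=O$ and $W'-P_1W$ is a sum of two forcing terms involving only $[P_1,[P_2,P_3]]$ and $[P_2,[P_2,P_3]]$ --- exactly the pair the theorem assumes bounded --- which yields \R{eq:3.1} directly and explains why precisely these two commutators occur. Without this grouping (or an equivalent one) your route either requires extra boundedness hypotheses or leaves the crucial rearrangements unjustified in the unbounded case. (Your sketch of the necessity direction is heuristic, but the paper does not elaborate that direction either.)
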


The remainder of this section is concerned with the proof of Theorem~\ref{thm:3rdOrder}. Our approach is based upon  application of the familiar {\em Duhamel principle,\/} namely that the solution of $\MM{y}'=C\MM{y}+\MM{d}(t,\MM{y})$, $\MM{y}(0)=\MM{y}_0$, obeys
\begin{displaymath}
  \MM{y}(t)=\ee^{tC}\MM{y}_0+\int_0^t \ee^{(t-\tau)C} \MM{d}(\tau,\MM{y}(\tau))\D\tau.
\end{displaymath}

Computing directly,
\begin{displaymath}
  \MM{S}'-(P_1+P_2+P_3)\MM{S}=[\ee^{tP_1},P_2]\ee^{tP_2}\ee^{tP_3}+[\ee^{tP_1}\ee^{tP_2},P_3]\ee^{tP_3}.
\end{displaymath}
Therefore $\MM{S}(0)=\MM{I}$, $\LL=P_1+P_2+P_3$ and the Duhamel principle imply, after easy algebra, 
\begin{Eqnarray*}
  \MM{E}(t)=\MM{S}(t)-\ee^{t\mathcal{L}}&=&\int_0^t \ee^{(t-\tau)\mathcal{L}} [\ee^{\tau P_1},P_2]\ee^{\tau P_2}\ee^{\tau P_3}\D\tau\\
  &&\mbox{}+\int_0^t \ee^{(t-\tau)\mathcal{L}} [\ee^{\tau P_1}\ee^{\tau P_2},P_3]\ee^{\tau P_3}\D\tau.
\end{Eqnarray*}

\begin{proposition}
  \label{prop:Z}
  Let
  \begin{displaymath}
    Z(t;P,Q)=[\ee^{tP},Q],
  \end{displaymath}
  where $P$ and $Q$ are operators, which need not be bounded, except that $P$ generates a strongly continuous semigroup, while $[P,Q]$ is bounded. Then
  \begin{equation}
    \label{eq:3.2}
    Z(t)=\ee^{tP}\int_0^t \ee^{-\tau P}[P,Q]\ee^{\tau P}\D\tau=\int_0^t \ee^{\tau P}[P,Q]\ee^{-\tau P}\D\tau \ee^{tP}.
  \end{equation}
\end{proposition}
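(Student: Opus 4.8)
The plan is to establish \R{eq:3.2} by differentiating $Z(t;P,Q)=[\ee^{tP},Q]=\ee^{tP}Q-Q\ee^{tP}$ with respect to $t$ and recognising that $Z$ solves a linear inhomogeneous operator ODE to which Duhamel's principle applies. First I would compute
\begin{displaymath}
  Z'(t)=P\ee^{tP}Q-QP\ee^{tP}=P(\ee^{tP}Q-Q\ee^{tP})+(PQ-QP)\ee^{tP}=PZ(t)+[P,Q]\ee^{tP},
\end{displaymath}
together with $Z(0)=[\ee^{0\cdot P},Q]=[\MM{I},Q]=O$. Since $P$ generates a strongly continuous semigroup $\ee^{tP}$ and $[P,Q]$ is bounded, the inhomogeneous term $\MM{d}(t)=[P,Q]\ee^{tP}$ is a strongly continuous bounded-operator-valued function, so the Duhamel principle quoted just above yields
\begin{displaymath}
  Z(t)=\ee^{tP}\cdot O+\int_0^t \ee^{(t-\tau)P}[P,Q]\ee^{\tau P}\D\tau=\ee^{tP}\int_0^t \ee^{-\tau P}[P,Q]\ee^{\tau P}\D\tau,
\end{displaymath}
which is the first equality in \R{eq:3.2}. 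For the second equality I would instead write $Z(t)$ as a \emph{right} factor times $\ee^{tP}$: differentiating $\tilde Z(t):=Z(t)\ee^{-tP}$ — or, more cleanly, repeating the argument with the roles of the two terms interchanged — gives $Z'(t)=Z(t)P+\ee^{tP}[P,Q]$, and the corresponding (right-hand) Duhamel formula produces $Z(t)=\int_0^t \ee^{\tau P}[P,Q]\ee^{(t-\tau)P}\D\tau=\int_0^t \ee^{\tau P}[P,Q]\ee^{-\tau P}\D\tau\,\ee^{tP}$. Alternatively, the second form follows from the first purely algebraically by substituting $\tau\mapsto t-\tau$ in the integral and pulling $\ee^{tP}$ out on the right, using $\ee^{(t-\tau)(-P)}\ee^{tP}$-type identities on the semigroup.

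The one point that genuinely needs care, rather than being routine, is the justification that differentiating $\ee^{tP}Q$ and $Q\ee^{tP}$ in $t$ is legitimate and produces $P\ee^{tP}Q$ and $QP\ee^{tP}$ when $P$ is unbounded. The subtlety is that $\frac{\D}{\D t}\ee^{tP}x=P\ee^{tP}x$ holds only for $x$ in the domain of the generator, so the identity $Z'(t)=PZ(t)+[P,Q]\ee^{tP}$ must be read as an identity of operators applied to a suitably dense set of vectors (e.g.\ $x\in\mathrm{dom}(P)$ with $Qx\in\mathrm{dom}(P)$), and the hypothesis that $[P,Q]$ is bounded is exactly what guarantees the inhomogeneous term and the resulting integral extend by continuity to a bounded operator on the whole space. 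I would therefore phrase the computation on an appropriate core, verify that both sides of \R{eq:3.2} are bounded operators (the right-hand integrals being Bochner integrals of strongly continuous bounded-operator-valued integrands over a compact interval), and conclude by density. With that caveat handled, the proof is a short and essentially mechanical application of Duhamel's formula.
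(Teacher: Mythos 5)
Your proposal is correct and follows essentially the same route as the paper: note $Z(0)=O$, verify $Z'(t)-PZ(t)=[P,Q]\ee^{tP}$, and apply Duhamel. The extra care you take with the second equality (via the substitution $\tau\mapsto t-\tau$ or a right-hand Duhamel) and with domain/density issues for unbounded $P$ is sensible but goes beyond what the paper records, which leaves those points implicit.
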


\begin{proof}
  We first note that $Z(0)=O$. Since
  \begin{displaymath}
    \frac{\D Z(t;P,Q)}{\D t}-PZ(t;P,Q)=[P,Q]\ee^{tP},
  \end{displaymath}
  \R{eq:3.2} follows by the Duhamel formula \R{eq:3.1}.
\end{proof}

Let next
\begin{displaymath}
  V(\tau)=[\ee^{\tau P_1}\ee^{\tau P_2},P_3].
\end{displaymath}
Then
\begin{displaymath}
  V(\tau)=\ee^{\tau P_1}[\ee^{\tau P_2},P_3]+[\ee^{\tau P_1},P_3]\ee^{\tau P_2}=\ee^{\tau P_1}Z(\tau;P_2,P_3)+Z(\tau;P_1,P_3)\ee^{\tau P_2}.
\end{displaymath}
Therefore
\begin{Eqnarray*}
  \MM{E}(t)&=&\int_0^t \ee^{(t-\tau)\mathcal{L}} Z(\tau;P_1,P_2)\ee^{\tau P_2}\ee^{\tau P_3}\D\tau+\int_0^t \ee^{(t-\tau)\mathcal{L}} \ee^{\tau P_1} Z(\tau;P_2,P_3)\ee^{\tau P_3}\D\tau\\
  &&\mbox{}+\int_0^t \ee^{(t-\tau)\mathcal{L}} Z(\tau;P_1,P_3)\ee^{\tau P_2}\ee^{\tau P_3}\D\tau
\end{Eqnarray*}
and it follows from \R{eq:3.2} that
\begin{Eqnarray*}
   \MM{E}(t)&=&\int_0^t \ee^{(t-\tau)\mathcal{L}}\int_0^\tau \ee^{\eta P_1}[P_1,P_2]\ee^{-\eta P_1}\D\eta \MM{S}(\tau)\D\tau \\
   &&\mbox{}+\int_0^t \ee^{(t-\tau)\mathcal{L}} \ee^{\tau P_1} \int_0^\tau \ee^{\eta P_2}[P_2,P_3]\ee^{-\eta P_2}\D\eta \ee^{\tau P_2}\ee^{\tau P_3}\D\tau \\
   &&\mbox{}+\int_0^t \ee^{(t-\tau)\mathcal{L}} \int_0^\tau \ee^{\eta P_1}[P_1,P_3]\ee^{-\eta P_1}\D\eta \MM{S}(\tau)\D\tau.
\end{Eqnarray*}
Let
\begin{displaymath}
  W(\tau)=\ee^{\tau P_1}\int_0^\tau \ee^{\eta P_2}[P_2,P_3]\ee^{-\eta P_2}\D\eta-\int_0^\tau \ee^{\eta P_1}[P_2,P_3]\ee^{-\eta P_1}\D\eta \ee^{\tau P_1},
\end{displaymath}
therefore
\begin{Eqnarray*}
   \MM{E}(t)&=&\int_0^t \ee^{(t-\tau)\mathcal{L}}\int_0^\tau \ee^{\eta P_1}[P_1,P_2]\ee^{-\eta P_1}\D\eta \MM{S}(\tau)\D\tau \\
   &&\mbox{}+\int_0^t \ee^{(t-\tau)\mathcal{L}}  \int_0^\tau \ee^{\eta P_1}[P_2,P_3]\ee^{-\eta P_1}\D\eta \MM{S}(\tau)\D\tau \\
   &&\mbox{}+\int_0^t \ee^{(t-\tau)\mathcal{L}} \int_0^\tau \ee^{\eta P_1}[P_1,P_3]\ee^{-\eta P_1}\D\eta \MM{S}(\tau)\D\tau\\
   &&\mbox{}+\int_0^t \ee^{(t-\tau)\mathcal{L}} W(\tau)\ee^{\tau P_2}\ee^{\tau P_3}\D\tau\\
   &=&\int_0^t \ee^{(t-\tau)\mathcal{L}}\int_0^\tau \ee^{\eta P_1}([P_1,P_2]+[P_1,P_3]+[P_2,P_3])\ee^{-\eta P_1}\D\eta \MM{S}(\tau)\D\tau\\
   &&\mbox{}+\int_0^t \ee^{(t-\tau)\mathcal{L}} W(\tau)\ee^{\tau P_2}\ee^{\tau P_3}\D\tau.
\end{Eqnarray*}
Enforcing the second-order condition \R{eq:2.1} we thus obtain
\begin{displaymath}
  \MM{E}(t)=\int_0^t \ee^{(t-\tau)\mathcal{L}} W(\tau)\ee^{\tau P_2}\ee^{\tau P_3}\D\tau.
\end{displaymath}

The next step is to rewrite $W$ in a more user-friendly form. We have $W(0)=O$ and, after elementary algebra,
\begin{Eqnarray}
  \label{eq:3.3}
  W'(\tau)-P_1W(\tau)&=&\left[P_1,\int_0^\tau \ee^{\eta P_1}[P_2,P_3]\ee^{-\eta P_1}\D\eta\right]\!\ee^{\tau P_1}\\
  \nonumber
  &&\mbox{}+\ee^{\tau P_1} \left(\ee^{\tau P_2}[P_2,P_3]\ee^{-\tau P_2}-[P_2,P_3]\right)\!.
\end{Eqnarray}
But
\begin{displaymath}
  \left[P_1,\int_0^\tau \ee^{\eta P_1}[P_2,P_3]\ee^{-\eta P_1}\D\eta\right]=\int_0^\tau \ee^{\eta P_1}[P_1,[P_2,P_3]]\ee^{-\eta P_1}\D\eta.
\end{displaymath}
Let 
\begin{displaymath}
  R(\tau)=\ee^{\tau P_2}[P_2,P_3]\ee^{-\tau P_2}-[P_2,P_3].
\end{displaymath}
Therefore
\begin{Eqnarray*}
  R(\tau)&=&[\ee^{\tau P_2},[P_2,P_3]]\ee^{-\tau P_2}=Z(\tau;P_2,[P_2,P_3])\ee^{-\tau P_2} \\
  &=&\int_0^\tau \ee^{\eta P_2}[P_2,[P_2,P_3]]\ee^{-\eta P_2}\D\eta .
\end{Eqnarray*}
Substituting into \R{eq:3.3}, we thus have
\begin{Eqnarray*}
  W'(\tau)-P_1W(\tau)&=&\int_0^\tau \ee^{\eta P_1}[P_1,[P_2,P_3]]\ee^{-\eta P_1}\D\eta \ee^{\tau P_1}\\
  &&\mbox{}+\ee^{\tau P_1}\int_0^\tau \ee^{\eta P_2}[P_2,[P_2,P_3]]\ee^{-\eta P_2}\D\eta .
\end{Eqnarray*}
Since $W(0)=O$, Duhamel yields
\begin{Eqnarray*}
  W(\tau)&=&\int_0^\tau \ee^{(\tau-\eta)P_1} \int_0^\eta \ee^{\xi P_1}[P_1,[P_2,P_3]]\ee^{-\xi P_1}\D\xi \ee^{\eta P_1}\D\eta\\
  &&\mbox{}+\int_0^\tau \ee^{\tau P_1} \int_0^\eta \ee^{\xi P_2}[P_2,[P_2,P_3]]\ee^{-\xi P_2}\D\xi \D\eta.
\end{Eqnarray*}
Consequently,
\begin{Eqnarray}
  \nonumber
  \MM{E}(t)&=&\int_0^t \ee^{(t-\tau)\mathcal{L}} \int_0^\tau \ee^{(\tau-\eta)P_1} \int_0^\eta \ee^{\xi P_1}[P_1,[P_2,P_3]]\ee^{-\xi P_1}\D\xi \ee^{\eta P_1}\D\eta \ee^{\tau P_2}\ee^{\tau P_3}\D\tau\hspace*{10pt} \\
  \label{eq:3.4}
  &&\mbox{}+\int_0^t \ee^{(t-\tau)\mathcal{L}} \int_0^\tau \ee^{\tau P_1} \int_0^\eta \ee^{\xi P_2}[P_2,[P_2,P_3]]\ee^{-\xi P_2}\D\xi \D\eta  \ee^{\tau P_2}\ee^{\tau P_3}\D\tau.
\end{Eqnarray}

There is still a gap between this and \R{eq:3.1}, as well as a discrepancy (which turns out to be illusory) between the above form of $\EE$ and the fact that in the previous section the leading error term consisted of a linear combination of $[P_2,[P_1,P_2]]$ and $[P_3,[P_1,P_2]]$, while now it is $[P_1,[P_2,P_3]]$ and $[P_2,[P_2,P_3]]$. However,  by \R{eq:2.1}
\begin{displaymath}
  [P_2,[P_1,P_2]]=-[P_2,[P_1,P_3]]-[P_2,[P_2,P_3]],
\end{displaymath}
while the Jacobi identity results in 
\begin{displaymath}
  [P_2,[P_1,P_3]]-[P_3,[P_1,P_2]]=[P_1,[P_2,P_3]].
\end{displaymath}
Therefore
\begin{displaymath}
  E_3=\frac16 ([P_1,[P_2,P_3]]+[P_2,[P_2,P_3]]),
\end{displaymath}
composed of exactly the same building blocks as\R{eq:3.4}. Substitution in \R{eq:3.4} demonstrates that \R{eq:3.1} is true, thereby completing the proof of Theorem~\ref{thm:3rdOrder}.

The error expression \R{eq:3.1} provides a handy way to derive an upper bound of the error committed by the splitting $\SS$ in the case when $\|\ee^{t\mathcal{L}}\|,\|\ee^{tP_\ell}\|\equiv1$, $\ell=1,2,3$, where $\|\,\cdot\,\|$ is a norm. It then follows at once that
\begin{equation}
  \label{eq:3.5}
  \|\MM{E}(t)\|\leq \frac{t^3}{6} (\|[P_1,[P_2,P_3]]\|+\|[P_2,[P_2,P_3]]\|).
\end{equation}
Note that $\|\ee^{t\LL}\|\equiv1$ in the standard Euclidean norm for the linear Schr\"odinger equation \R{eq:1.5}, which has motivated much of this work. 

It is instructive to list few of the important differences between \R{eq:2.2} and \R{eq:3.5}, expressions which, on the face of it, look quite similar. Firstly, the derivation of \R{eq:2.2} was valid only subject to the assumption that all the operators concerned are bounded. Secondly, it depicts only the leading error term and provides no information whatsoever about subsequent terms -- this is important in particular when operators are unbounded and, even subject to the conditions of Theorem~\ref{thm:3rdOrder}, we can make no valid assumptions about the size (and the boundedness) of subsequent terms in the expansion. Finally, \R{eq:3.5} is an upper bound on the error and is hence significantly superior to a leading error term \R{eq:2.2}.

\section*{Acknowledgements}

The work of KK in this project has been supported by The National Center for Science (NCN), based on Grant No.\ 2019/34/E/ST1/00390.

\end{document}